\theoremstyle{plain}
\newtheorem{thm}{Theorem}[section]
\newtheorem*{thm*}{Theorem}
\newtheorem{prop}{Proposition}[section]
\newtheorem*{prop*}{Proposition}
\newtheorem{cor}{Corollary}[section]
\newtheorem*{cor*}{Corollary}
\newtheorem*{lem*}{Lemma}
\theoremstyle{definition}
\newtheorem*{defn*}{Definition}
\newtheorem{exmps}{Examples}[section]
\newtheorem*{exmps*}{Examples}
\newtheorem{exmp}[exmps]{Example}
\newtheorem*{exmp*}{Example}
\newtheorem*{exerc*}{Exercise}
\newtheorem{rems}{Remarks}[section]
\newtheorem*{rems*}{Remarks}
\newtheorem*{rem*}{Remark}
\newcommand{\R}{{\mathbb R}}%%%%%%%%%REAL NUMBERS
\newcommand{\C}{{\mathbb C}}%%%%%%%%%COMPLEX NUMBERS
\newcommand{\emps}{\emptyset}
\renewcommand{\bar}{\overline}
\numberwithin{equation}{section}
\DeclareMathOperator{\Rep}{Re\ignorespaces}
\DeclareMathOperator{\dist}{dist}
\DeclareMathOperator*{\esup}{\mbox{$E_A$}-ess\,sup}
\begin{document}
%%%%%%%%%%%%%%%%%%%%%%TITLE%%%%%%%%%%%%%%%%%%%%
\title[On asymptotics for $C_0$-semigroups]
{On asymptotics for $C_0$-semigroups}
%%%%%%%%%%%%%%%%%%%%%%AUTHOR%%%%%%%%%%%%%%%%%%%
\author[Marat V. Markin]{Marat V. Markin}
%%%%%%%%%%%%%%%%%%%%%ADDRESS%%%%%%%%%%%%%%%%%%%
\address{
Department of Mathematics\newline
%College of Science and Mathematics\newline
California State University, Fresno\newline
5245 N. Backer Avenue, M/S PB 108\newline
Fresno, CA 93740-8001, USA
}
%%%%%%%%%%%%%%%%%%%%%E-MAIL%%%%%%%%%%%%%%%%%%%%%
\email{mmarkin@csufresno.edu}
%%%%%%%%%%%%%%%%%%%%%DEDICATORY%%%%%%%%%%%%%%%%%
%\dedicatory{}
%%%%%%%%%%%%%%%%%%%%DATE%%%%%%%%%%%%%%%%%%%%%%
%\date{}
%%%%%%%%%%%%%%%%%%%%%ACKNOWLEDGEMENTS%%%%%%%%%%%
%\thanks{}
%%%%%%%%%%%%%%%%%%%SUBJECT CLASSIFICATION%%%%%%
\subjclass{Primary 47A10, 47B40, 47D03; Secondary 47B15, 47D06, 47D60}
%%%%%%%%%%%%%%%%%%%%KEYWORDS%%%%%%%%%%%%%%%%%%%
\keywords{Scalar type spectral operator, normal operator, $C_0$-semigroup, spectral bound, growth bound, uniform exponential stability}
%%%%%%%%%%%%%%%%%%%%ABSTRACT%%%%%%%%%%%%%%%%%%%
\begin{abstract}
We stretch the \textit{spectral bound equal growth bound condition} along with a \textit{generalized Lyapunov stability theorem}, known to hold for $C_0$-semigroups of normal operators on complex Hilbert spaces, to $C_0$-semigroups of scalar type spectral operators on complex Banach spaces. For such semigroups, we obtain exponential estimates with the best stability constants. We also extend to a Banach space setting a  celebrated characterization of uniform exponential stability for $C_0$-semigroups on complex Hilbert spaces and thereby acquire a characterization of uniform exponential stability for scalar type spectral and eventually norm-continuous $C_0$-semigroups. 
\end{abstract}
%%%%%%%%%%%%%%%%%%%%%%%%%%%%%%%%%%%%%%%%%%%%%%
\maketitle
%%%%%%%%%%%%%%%%%%%%%%%%%EPIGRATH%%%%%%%%%%%%%%%
%\epigraph{\textit{I believe that mathematical reality lies outside us, that our function is to discover or observe it, and that the theorems which we prove, and which we describe grandiloquently as our ``creations," are simply the notes of our observations.}}{G.H. Hardy}

% % % % % % % % % % % % % % % % % % % % % % % % % 
\section[Introduction]{Introduction}

Based on the recently established fact that $C_0$-semigroups of scalar type spectral operators on complex Banach spaces are subject to a \textit{precise weak spectral mapping theorem} \cite{arXiv:2002.09087}, we stretch to such semigroups the \textit{spectral bound equal growth bound condition} along with a \textit{generalized Lyapunov stability theorem} (see Preliminaries), known to hold for $C_0$-semigroups of normal operators on complex Hilbert spaces, and further obtain exponential estimates with the best stability constants for them. We also extend to a Banach space setting the  celebrated \textit{Gearhart-Pr\"uss-Greiner characterization} of uniform exponential stability for $C_0$-semigroups on complex Hilbert spaces \cite[Theorem V.$3.8$]{Engel-Nagel2006} (see \cite{Gearhart1978,Pruss1984,Greiner1985}) and thereby acquire a characterization of uniform exponential stability for scalar type spectral and eventually norm-continuous $C_0$-semigroups. 

%%%%%%%%%%%%%%%%%%%%%%%%%%%%%%%%%%%%%%%%%%%%
\section[Preliminaries]{Preliminaries}

Here, we outline certain essential preliminaries.

%%%%%%%%%%%%%%%%%%%%%%%%%%%%%%%%%%%%%%%%%%%%
\subsection{Resolvent Set and Spectrum}\

For a closed linear operator $A$ in a complex Banach space $X$, the set
\[
\rho(A):=\left\{ \lambda\in \C \,\middle|\, \exists\, 
R(\lambda,A):={(A-\lambda I)}^{-1}\in L(X) \right\}
\]
($I$ is the \textit{identity operator} on $X$, $L(X)$ is the space of bounded linear operators on $X$) and its complement $\sigma(A):=\C\setminus \rho(A)$ are called its \textit{resolvent set} and \textit{spectrum}, respectively.

%%%%%%%%%%%%%%%%%%%%%%%%%%%%%%%%%%%%%%%%%
\subsection{$C_0$-Semigroups}\

A $C_0$-semigroup $\left\{T(t) \right\}_{t\ge 0}$ on a complex Banach space $(X,\|\cdot\|)$ with generator $A$ subject to a \textit{weak spectral mapping theorem}
\begin{equation*}\tag{WSMT}\label{WSMT}
\sigma(T(t))\setminus \{0\}=\overline{e^{t\sigma(A)}}\setminus \{0\},\ t\ge 0,
\end{equation*}
($\overline{\cdot}$ is the \textit{closure} of a set) satisfies the following \textit{spectral bound equal growth bound condition}:
\begin{equation*}\tag{SBeGB}\label{SBeGB}
s(A)=\omega_0,
\end{equation*}
where 
\[
s(A):=\sup\left\{\Rep\lambda \,\middle|\,\lambda\in\sigma(A) \right\}
\ (s(A):=-\infty\ \text{if}\ \sigma(A)=\emptyset)
\] 
is the \textit{spectral bound} of the generator $A$ and
\begin{equation}\label{gb}
\omega_0:=\inf\left\{\omega\in\R \,\middle|\,\exists\,M=M(\omega)\ge 1:\ \|T(t)\|\le M e^{\omega t},\ t\ge 0 \right\}
\end{equation}
(here and henceforth, we use the same notation for the \textit{operator norm} as for the norm on $X$) is the \textit{growth bound} of the semigroup \cite[Proposition  V.$2.3$]{Engel-Nagel2006} (see also \cite{van Neerven1996}).

Generally,
\[
-\infty\le s(A)\le \omega_0 = \inf_{t>0}\frac{1}{t}\ln\|T(t)\|
= \lim_{t\to \infty}\frac{1}{t}\ln\|T(t)\|<\infty
\]
(see, e.g., \cite[Proposition V.$1.22$]{Engel-Nagel2006}).

An \textit{eventually norm-continuous} $C_0$-semigroup $\left\{T(t) \right\}_{t\ge 0}$ on a complex Banach space, i.e., such that, for some $t_0>0$, the operator function
\[
[t_0,\infty)\ni t\mapsto T(t)\in L(X),
\]
is \textit{continuous} relative to the \textit{operator norm}, is subject to the following stronger version of \eqref{WSMT}:
\begin{equation*}\tag{SMT}\label{SMT}
\sigma(T(t))\setminus \{0\}=e^{t\sigma(A)},\ t\ge 0,
\end{equation*}
called a \textit{spectral mapping theorem} (see \cite[Proposition  V.$2.3$]{Engel-Nagel2006} and \cite[Theorem V.$2.8$]{Engel-Nagel2006}). The class of eventually norm-continuous $C_0$-semigroups encompasses $C_0$-semigroups with certain regularity properties, such as \textit{eventually compact} and \textit{eventually differentiable}, in particular \textit{analytic} and \textit{uniformly continuous} (see \cite[Section II.5]{Engel-Nagel2006}).

The asymptotic behavior of a $C_0$-semigroup $\left\{T(t) \right\}_{t\ge 0}$ on a complex Banach space with generator $A$ satisfying \textit{spectral bound equal growth bound condition} \eqref{SBeGB} is governed by the spectral bound of its generator and, in particular, is subject to the subsequent generalization of the classical \textit{Lyapunov Stability Theorem} \cite[Theorem V.$3.6$]{Engel-Nagel2006} (see also \cite{Lyapunov1892}).

\begin{thm}[Generalized Lyapunov Stability Theorem]\label{GLST}\ \\
A $C_0$-semigroup $\left\{T(t) \right\}_{t\ge 0}$ on a complex Banach space $X$ with generator $A$, for which spectral bound equal growth bound condition
\eqref{SBeGB} holds, is uniformly exponentially stable,  i.e., $\omega_0<0$, or equivalently,
\[
\exists\, \omega<0,\ \exists\,M=M(\omega)\ge 1:\ \|T(t)\|\le M e^{\omega t},\ t\ge 0,
\]
iff 
\[
s(A)<0.
\]
\end{thm}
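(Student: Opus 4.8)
The plan is to observe that, once the spectral bound equal growth bound condition \eqref{SBeGB} is in force, the theorem is almost immediate: it merely transcribes the defining equality $s(A)=\omega_0$ into the language of exponential stability. Accordingly, I would split the argument into two short observations---one relating $\omega_0$ to the explicit exponential estimate, and one relating the two strict inequalities.

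First I would verify that $\omega_0<0$ is equivalent to the existence of $\omega<0$ and $M\ge 1$ with $\|T(t)\|\le Me^{\omega t}$ for all $t\ge 0$. This is purely a matter of unwinding definition \eqref{gb}. The defining set
\[
S:=\left\{\omega\in\R \,\middle|\,\exists\,M\ge 1:\ \|T(t)\|\le Me^{\omega t},\ t\ge 0 \right\}
\]
is upward closed, since $Me^{\omega t}\le Me^{\omega' t}$ for $t\ge 0$ whenever $\omega\le\omega'$, and $\omega_0=\inf S$. If $\omega_0<0$, then choosing any $\omega\in(\omega_0,0)$ and using that $\omega$ strictly exceeds the infimum of the upward-closed set $S$ forces $\omega\in S$, which yields the desired estimate with a strictly negative exponent. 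Conversely, any $\omega<0$ admitting such an estimate lies in $S$, whence $\omega_0\le\omega<0$.

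Next I would invoke the hypothesis. Since \eqref{SBeGB} holds by assumption, we have the identity $s(A)=\omega_0$, and therefore the strict inequality $\omega_0<0$ holds if and only if $s(A)<0$. Combined with the first observation, this establishes the asserted chain of equivalences in its entirety.

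The main point to stress is that there is no genuine obstacle in this argument: the entire analytic content has already been packaged into condition \eqref{SBeGB}, whose validity for the semigroups under consideration rests on the weak spectral mapping theorem recalled in the Preliminaries. Once $s(A)=\omega_0$ is granted, the Generalized Lyapunov Stability Theorem is a direct transcription, exactly as the classical Lyapunov Stability Theorem \cite[Theorem V.$3.6$]{Engel-Nagel2006} is for semigroups of normal operators on Hilbert space.
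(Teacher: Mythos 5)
Your proposal is correct: the paper states this theorem as a known result in the Preliminaries, citing \cite[Definition V.$3.1$, Proposition V.$3.5$, Theorem V.$3.7$]{Engel-Nagel2006} rather than proving it, and your argument --- unwinding the infimum in definition \eqref{gb} via the upward-closedness of the set of admissible growth exponents, then substituting $s(A)=\omega_0$ from \eqref{SBeGB} --- is exactly the standard, essentially definitional reasoning those references encode. No gap.
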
 

Cf. \cite[Definition V.$3.1$]{Engel-Nagel2006}, \cite[Proposition V.$3.5$]{Engel-Nagel2006}, and \cite[Theorem V.$3.7$]{Engel-Nagel2006}.

The following statement \cite{Gearhart1978,Pruss1984,Greiner1985} characterizes uniform exponential stability for $C_0$-semigroups on complex Hilbert spaces.

\begin{thm}[Gearhart-Pr\"uss-Greiner Characterization {\cite[Theorem V.$3.8$]{Engel-Nagel2006}}]\label{G-P-GC}\ \\
A $C_0$-semigroup $\left\{T(t) \right\}_{t\ge 0}$ on a complex Hilbert space space $(X,(\cdot,\cdot),\|\cdot\|)$ with generator $A$ is uniformly exponentially stable iff
\begin{equation*}
\left\{\lambda\in \C \,\middle|\, \Rep\lambda\ge 0\right\}\subseteq \rho(A)\quad \text{and}\quad
\sup_{\Rep\lambda\ge 0}\|R(\lambda,A)\|<\infty.
\end{equation*}
\end{thm}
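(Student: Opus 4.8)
Since the assertion is an equivalence, the plan is to prove its two directions by quite different devices. The necessity of the two resolvent conditions is soft, and I would dispatch it first via the Laplace transform. If $\{T(t)\}_{t\ge 0}$ is uniformly exponentially stable, say $\|T(t)\|\le Me^{\omega t}$ with $\omega<0$, then $s(A)\le\omega_0\le\omega<0$, so $\{\lambda\in\C\mid\Rep\lambda\ge 0\}\subseteq\{\lambda\mid\Rep\lambda>s(A)\}\subseteq\rho(A)$; moreover, for $\Rep\lambda>\omega_0$ the resolvent admits the representation $R(\lambda,A)=-\int_0^\infty e^{-\lambda t}T(t)\,dt$, whence
\[
\|R(\lambda,A)\|\le\int_0^\infty e^{-(\Rep\lambda)t}Me^{\omega t}\,dt=\frac{M}{\Rep\lambda-\omega}\le\frac{M}{|\omega|},\qquad \Rep\lambda\ge 0,
\]
which is the desired uniform bound.

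For the converse --- the substance of the theorem --- assume $\{\lambda\mid\Rep\lambda\ge 0\}\subseteq\rho(A)$ and $M:=\sup_{\Rep\lambda\ge 0}\|R(\lambda,A)\|<\infty$. A Neumann-series argument already shows that the resolvent persists and stays bounded on a half-plane $\{\Rep\lambda>-\delta\}$ with $\delta=1/(2M)>0$, so that $s(A)<0$. It is essential to note that this spectral information \emph{alone} does not force $\omega_0<0$, since the spectral bound and the growth bound may differ; the extra, quantitative content of the uniform bound $M$ must be exploited, and this is exactly where the Hilbert-space geometry enters through Plancherel's theorem. The engine of the proof is the finite-Laplace identity obtained by integrating $\frac{d}{d\tau}\!\big(e^{-is\tau}T(\tau)R(is,A)x\big)=e^{-is\tau}T(\tau)x$ over $[0,t]$, namely
\[
\int_0^t e^{-is\tau}T(\tau)x\,d\tau=e^{-ist}T(t)R(is,A)x-R(is,A)x,\qquad x\in X,\ s\in\R,
\]
whose left-hand side is the Fourier transform of the compactly supported map $\tau\mapsto\mathbf 1_{[0,t]}(\tau)T(\tau)x$. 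Applying Plancherel's identity in the Hilbert space $X$ and using the uniform bound on $R(is,A)$, the plan is to derive the square-function estimates $\int_0^\infty\|T(t)x\|^2\,dt\le C\|x\|^2$ for every $x\in X$ and, since $\|R(\lambda,A)^{*}\|=\|R(\lambda,A)\|$, the companion estimate $\int_0^\infty\|T(t)^{*}y\|^2\,dt\le C\|y\|^2$ for every $y\in X$ for the adjoint semigroup.

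Granting these two bounds, I would conclude as follows. Because $T(t-\tau)T(\tau)=T(t)$, for all $x,y\in X$ and $t>0$,
\[
t\,(T(t)x,y)=\int_0^t\big(T(t-\tau)T(\tau)x,\,y\big)\,d\tau=\int_0^t\big(T(\tau)x,\,T(t-\tau)^{*}y\big)\,d\tau,
\]
so the Cauchy--Schwarz inequality in $L^2\big((0,t);X\big)$ together with the two square-function bounds yields $|(T(t)x,y)|\le (C/t)\,\|x\|\,\|y\|$, i.e.\ $\|T(t)\|\le C/t$. Choosing $t_1>C$ gives $\|T(t_1)\|<1$, hence $\omega_0=\inf_{t>0}\frac1t\ln\|T(t)\|\le\frac{1}{t_1}\ln\|T(t_1)\|<0$, which is uniform exponential stability.

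The main obstacle is the middle step: extracting the clean square-function bound $\int_0^\infty\|T(t)x\|^2\,dt\le C\|x\|^2$, valid for all $x\in X$, from the mere uniform boundedness of $R(\,\cdot\,,A)$ on the imaginary axis. The pointwise estimate $\|R(is,A)x\|\le M\|x\|$ does not by itself make $s\mapsto\|R(is,A)x\|$ square-integrable, so the argument cannot proceed by a naive triangle-inequality split of the identity above; the cancellation between its two right-hand terms must be retained, and it is precisely Plancherel's isometry --- the exact numerical correspondence between half-plane resolvent data and the $L^2$-size of the orbits, unavailable in a general Banach space --- that delivers the bound. This is also the point at which any attempt to transplant the result to a Banach setting must supply a substitute for the Hilbert-space $L^2$ theory.
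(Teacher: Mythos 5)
A preliminary remark on the comparison you ask for: the paper does not prove Theorem \ref{G-P-GC} at all --- it is quoted in the Preliminaries as a known result, with a citation to \cite[Theorem V.$3.8$]{Engel-Nagel2006} and to the original papers of Gearhart, Pr\"uss, and Greiner. The only related argument the paper actually carries out is the proof of Theorem \ref{CUES}: its \emph{necessity} part is precisely your Laplace-transform computation (that half of your proposal is correct and coincides with the paper), while its \emph{sufficiency} part is established only under the extra hypothesis \eqref{SBeGB}, by a soft contradiction argument that never engages the genuinely Hilbertian content you are attempting to prove.

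Judged as a stand-alone proof, your sufficiency direction has a genuine gap, and it sits exactly where you place it yourself. Applying Plancherel to $\tau\mapsto\mathbf{1}_{[0,t]}(\tau)T(\tau)x$ gives
\begin{equation*}
2\pi\int_0^t\|T(\tau)x\|^2\,d\tau=\int_{-\infty}^{\infty}\left\|e^{-ist}T(t)R(is,A)x-R(is,A)x\right\|^2\,ds,
\end{equation*}
but you never show the right-hand side is finite, let alone bounded by $C\|x\|^2$ uniformly in $t$: the integrand is merely bounded in $s$ (and for $x\in D(A)$, where one does get $O(1/|s|)$ decay, the resulting bound involves $\|Ax\|$ and $\|T(t)\|$, which is not the estimate you need). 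Announcing that ``the cancellation between the two right-hand terms must be retained'' restates the difficulty; it is not an argument, and the square-function estimate $\int_0^\infty\|T(t)x\|^2\,dt\le C\|x\|^2$ is the entire substance of the theorem. The standard way to close the gap (Pr\"uss) avoids the truncation: fix $\omega>\max\{0,\omega_0\}$, so that $t\mapsto e^{-\omega t}T(t)x$ lies in $L^2(\R_+;X)$ with Fourier transform $s\mapsto -R(\omega+is,A)x$, whence $\int_{\R}\|R(\omega+is,A)x\|^2\,ds=2\pi\int_0^\infty e^{-2\omega t}\|T(t)x\|^2\,dt<\infty$ by Plancherel; then the resolvent identity $R(\eps+is,A)=R(\omega+is,A)+(\eps-\omega)R(\eps+is,A)R(\omega+is,A)$ together with $M:=\sup_{\Rep\lambda\ge 0}\|R(\lambda,A)\|<\infty$ gives $\|R(\eps+is,A)x\|\le(1+\omega M)\|R(\omega+is,A)x\|$ for $0<\eps\le\omega$; a second application of Plancherel on the line $\Rep\lambda=\eps$ yields $\int_0^\infty e^{-2\eps t}\|T(t)x\|^2\,dt\le(1+\omega M)^2\int_0^\infty e^{-2\omega t}\|T(t)x\|^2\,dt$, and monotone convergence as $\eps\downarrow 0$ delivers the desired bound. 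Once that estimate and its adjoint counterpart are in hand, your concluding steps are correct: the duality identity gives $t\,|(T(t)x,y)|\le C\|x\|\,\|y\|$, hence $\|T(t)\|\le C/t$ and $\omega_0<0$. Your Neumann-series observation that $s(A)<0$ is also correct, though it is not needed for the conclusion.
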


A $C_0$-semigroup $\left\{T(t) \right\}_{t\ge 0}$ (of \textit{normal operators}) on a complex Hilbert space generated by a \textit{normal operator} $A$ is subject to the following precise version of \textit{weak spectral mapping theorem} \eqref{WSMT}:
\begin{equation*}\tag{PWSMT}\label{PWSMT}
\sigma(T(t))=\overline{e^{t\sigma(A)}},\ t\ge 0,
\end{equation*}
\cite[Corollary V.$2.12$]{Engel-Nagel2006} without being a priori eventually norm continuous, and hence, to \textit{spectral bound equal growth bound condition} \eqref{SBeGB} along with the \textit{Generalized Lyapunov Stability Theorem} (Theorem \ref{GLST}). 

%%%%%%%%%%%%%%%%%%%%%%%%%%%%%%%%%%%%%%%%%
\subsection{Scalar Type Spectral Operators}\

A {\it scalar type spectral operator} is a densely defined closed linear operator $A$ in a complex Banach space with strongly $\sigma$-additive \textit{spectral measure} (the \textit{resolution of the identity}) $E_A(\cdot)$, which assigns to the Borel sets of the complex plane projection operators on $X$ and has the operator's \textit{spectrum} $\sigma(A)$ as its {\it support} \cite{Dunford1954,Survey58,Dun-SchIII}.

Associated with such an operator is the {\it Borel operational calculus}, assigning to each Borel measurable function $F:\sigma(A)\to \bar{\C}$ ($\bar{\C}:=\C\cup \{\infty\}$ is the extended complex plane) with $E_A\left(\left\{\lambda\in \C \,\middle|\, F(\lambda)=\infty\right\}\right)=0$ a scalar type spectral operator
\begin{equation*}
F(A):=\int\limits_{\sigma(A)} F(\lambda)\,dE_A(\lambda)
\end{equation*} 
in $X$, whose spectral measure is the image of $E_A(\cdot)$ under $F(\cdot)$, i.e.,
\begin{equation*}
E_{F(A)}(\delta)=E_{A}(F^{-1}(\delta)),\ \delta\in \mathscr{B}(\C),
\end{equation*}
($\mathscr{B}(\C)$ is the \textit{Borel $\sigma$-algebra} on $\C$), with
\begin{equation*}
A=\int\limits_{\sigma(A)} \lambda\,dE_A(\lambda)
\end{equation*}
\cite{Bade1954,Dunford1954,Survey58,Dun-SchIII}.

On a complex finite-dimensional Banach space, scalar type spectral operators are those linear operators, which furnish an \textit{eigenbasis} for the space, i.e., allow a diagonal matrix representation (see, e.g., \cite{Dunford1954,Survey58,Dun-SchIII}). 

In a complex Hilbert space, scalar type spectral operators are those that are similar to {\it normal operators} \cite{Wermer1954} (see also \cite{Lorch1939,Mackey1952}), the latter being the scalar type spectral operators for which the corresponding spectral measure projections are \textit{orthogonal} (see, e.g., \cite{Dun-SchII,Plesner}).

Various examples of scalar type spectral operators, including differential operators arising in the study of linear systems of partial differential equations, in particular perturbed Laplacians, can be found in \cite{Dun-SchIII}.

%%%%%%%%%%%%%%%%%%%%%%%%%%%%%%%%%%%%%%%%%
Due to its strong $\sigma$-additivity, the spectral measure is uniformly bounded, i.e.,
\begin{equation}\label{bounded}
\exists\, M\ge 1\ \forall\,\delta\in \mathscr{B}(\C):\ \|E_A(\delta)\|\le M
\end{equation}
(see, e.g., \cite{Dun-SchI}).

%\begin{rem}
%The notation $\|\cdot\|$ is used here and henceforth to also designate the operator norm on $L(X)$.
%\end{rem}

By \cite[Theorem XVIII.$2.11$ (c)]{Dun-SchIII}, for a Borel measurable function $F:\sigma(A)\to \bar{\C}$, the operator $F(A)$ is \textit{bounded} iff $F(\cdot)$ is $E_A$-\textit{essentially bounded}, i.e.,
\[
\esup_{\lambda\in \sigma(A)}|F(\lambda)|<\infty,
\]
in which case
\begin{equation}\label{boundedop}
\esup_{\lambda\in \sigma(A)}|F(\lambda)|\le \|F(A)\|
\le 4M\esup_{\lambda\in \sigma(A)}|F(\lambda)|,
\end{equation}
where $M\ge 1$ is from \eqref{bounded}.

%%%%%%%%%%%%%%%%%%%%%%%%%%%%%%%%%%%%%%%%%%%%%%
A scalar type spectral $C_0$-semigroup $\left\{T(t) \right\}_{t\ge 0}$ (i.e., a $C_0$-semigroup of scalar type spectral operators) on a complex Banach space $X$ is generated by a scalar type spectral operator \cite{Berkson1966,Panchapagesan1969}, which is the case \textit{iff}
\[
s(A)<\infty
\] 
with
\[
T(t)=e^{tA}:=\int\limits_{\sigma(A)} e^{t\lambda}\,dE_A(\lambda),\ t\ge 0,
\]
{\cite[Proposition $3.1$]{Markin2002(2)}}, the orbit maps of the semigroup
\[
T(t)f=e^{tA}f,\ t\ge 0,f\in X,
\] 
being the \textit{weak solutions} (also called the \textit{mild solutions}) of the associated abstract evolution equation
\begin{equation*}%\label{1}
y'(t)=Ay(t),\ t\ge 0,
\end{equation*}
\cite{Markin2002(1)} (see also \cite{Ball,Engel-Nagel2006}).

%%%%%%%%%%%%%%%%%%%%%%%%%%%%%%%%%%%%%%%%%%%%
\section{SBeGB Condition and Exponential Estimates}

By the \textit{Precise Weak Spectral Mapping Theorem} \cite[Theorem $5.1$]{arXiv:2002.09087}, a $C_0$-semigroup $\left\{T(t) \right\}_{t\ge 0}$ (of scalar type spectral operators) on a complex Banach space generated by a scalar type spectral operator $A$ is subject to \textit{precise weak spectral mapping theorem} \eqref{PWSMT}. Thus, by \cite[Proposition  V.$2.3$]{Engel-Nagel2006} (see Preliminaries), we arrive at the following

\begin{cor}[SBeGB Condition]\label{SBeGBcor}\ \\
For a $C_0$-semigroup $\left\{T(t) \right\}_{t\ge 0}$ (of scalar type spectral operators) on a complex Banach space generated by a scalar type spectral operator $A$, spectral bound equal growth bound condition \eqref{SBeGB} holds.
\end{cor}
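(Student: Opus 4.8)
The plan is to chain together the two results invoked in the sentence immediately preceding the corollary, so the argument is essentially a composition of already-established facts. First I would appeal to the \emph{Precise Weak Spectral Mapping Theorem} \cite[Theorem $5.1$]{arXiv:2002.09087}, which asserts that any $C_0$-semigroup of scalar type spectral operators generated by a scalar type spectral operator $A$ is subject to \eqref{PWSMT}, i.e., $\sigma(T(t))=\overline{e^{t\sigma(A)}}$ for every $t\ge 0$. The hypothesis that $A$ is scalar type spectral is precisely what unlocks this theorem; and the companion fact that the semigroup generated by such an $A$ genuinely consists of scalar type spectral operators is the Berkson--Panchapagesan generation result recalled in the Preliminaries \cite{Berkson1966,Panchapagesan1969}.

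Second, I would observe that \eqref{PWSMT} at once yields the weak spectral mapping theorem \eqref{WSMT}. Intersecting both sides of $\sigma(T(t))=\overline{e^{t\sigma(A)}}$ with $\C\setminus\{0\}$ produces $\sigma(T(t))\setminus\{0\}=\overline{e^{t\sigma(A)}}\setminus\{0\}$ for all $t\ge 0$, which is exactly \eqref{WSMT}. This passage is purely set-theoretic and imposes no additional hypotheses; it merely discards the possible spectral value $0$, which is irrelevant to the subsequent reasoning.

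Finally, I would invoke the implication recorded in the Preliminaries --- that any $C_0$-semigroup subject to \eqref{WSMT} satisfies the spectral bound equal growth bound condition \eqref{SBeGB}, a principle resting on \cite[Proposition V.$2.3$]{Engel-Nagel2006}. Applying this to the semigroup at hand delivers $s(A)=\omega_0$, which is the assertion of the corollary.

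Since every link in this chain has been established beforehand, I do not anticipate a genuine technical obstacle: the corollary is a direct consequence of combining the Precise Weak Spectral Mapping Theorem with the standard ``\eqref{WSMT} implies \eqref{SBeGB}'' principle. If I had to single out the one point requiring care, it would be confirming that the operators $T(t)$ themselves are of scalar type spectral, so that Theorem $5.1$ of \cite{arXiv:2002.09087} actually applies rather than merely the semigroup being \emph{generated} by a scalar type spectral operator --- but this is settled by the generation characterization cited in the Preliminaries, so the verification is immediate.
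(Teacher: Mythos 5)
Your argument is correct and is essentially the paper's own: the corollary is derived by applying the Precise Weak Spectral Mapping Theorem \cite[Theorem $5.1$]{arXiv:2002.09087} to obtain \eqref{PWSMT}, hence \eqref{WSMT}, and then invoking \cite[Proposition V.$2.3$]{Engel-Nagel2006} as recalled in the Preliminaries. The only quibble is attributional: the fact that the semigroup generated by a scalar type spectral operator consists of scalar type spectral operators (namely $T(t)=e^{tA}$ via the operational calculus) comes from \cite[Proposition $3.1$]{Markin2002(2)}, while \cite{Berkson1966,Panchapagesan1969} supply the converse direction.
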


\begin{rems}\
\begin{itemize}
\item Considering that, for a scalar type spectral operator $A$ in a complex Banach space, $\sigma(A)\neq \emps$,  when such an operator generates a 
$C_0$-semigroup $\left\{T(t) \right\}_{t\ge 0}$,
\[
-\infty <s(A)=\omega_0<\infty
\]
(see Preliminaries).
%%%%%
\item For a $C_0$-semigroup $\left\{T(t) \right\}_{t\ge 0}$, the \textit{exponential estimates}
\begin{equation*}\tag{EE}\label{EE}
\exists\, \omega\in \R,\ \exists\, M=M(\omega)\ge 1:\ \|T(t)\|\le Me^{\omega t},\ t\ge 0,
\end{equation*}
hold for all $\omega>\omega_0$ with some $M=M(\omega)\ge 1$ (see Preliminaries). 

However, the \textit{best stability constants}, i.e., the smallest numbers $\omega$ and $M$ for which \eqref{EE} is valid, (cf. \cite{Lat-Yur2013}) need not exist, i.e., the infimum in \eqref{gb} may not be attained even when $\omega_0\in \R$ (see \cite[Section I.1]{Engel-Nagel2006} and Example \ref{QNexmp}).
\end{itemize}
\end{rems}

\begin{prop}[Exponential Estimate for Scalar Type Spectral $C_0$-Semigroups]\label{GRESTP}
Let $\left\{T(t) \right\}_{t\ge 0}$ be a $C_0$-semigroup  (of scalar type spectral operators) on a complex Banach space $(X,\|\cdot\|)$ generated by a scalar type spectral operator $A$ with spectral measure $E_A(\cdot)$. Then
\begin{equation}\label{grest}
\|T(t)\|\le 4M_0 e^{\omega_0t},\ t\ge 0,
\end{equation}
where $\omega_0=s(A)$ is the best stability constant in the exponent and 
\begin{equation}\label{bsc}
M_0:=\sup_{\delta \in \mathscr{B}(\C)}\|E_A(\delta)\|\ge 1.
\end{equation}
\end{prop}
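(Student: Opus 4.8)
The plan is to read the estimate directly off the Borel operational calculus. Since $A$ generates the semigroup, the representation recalled in the Preliminaries gives
\[
T(t)=e^{tA}=\int_{\sigma(A)}e^{t\lambda}\,dE_A(\lambda),\quad t\ge 0,
\]
so that $T(t)=F_t(A)$ with $F_t(\lambda):=e^{t\lambda}$. The heart of the matter is to apply the operational-calculus norm bound \eqref{boundedop} to $F_t$, taking for the constant $M$ of \eqref{bounded} the optimal value $M_0$ from \eqref{bsc}; note $M_0\ge 1$ since $E_A(\C)=I$ forces $\|E_A(\C)\|=1$.

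First I would control the $E_A$-essential supremum of $F_t$. For $t\ge 0$ and $\lambda\in\sigma(A)$ one has $|e^{t\lambda}|=e^{t\Rep\lambda}$, and since $\Rep\lambda\le s(A)$ on the spectrum while $t\ge 0$, this yields the pointwise bound $|F_t(\lambda)|\le e^{ts(A)}$ on $\sigma(A)$. (In particular $F_t$ is $E_A$-essentially bounded, consistent with $T(t)\in L(X)$, which uses $s(A)<\infty$ --- precisely the condition under which $A$ generates a semigroup.) Hence
\[
\esup_{\lambda\in\sigma(A)}|F_t(\lambda)|\le e^{ts(A)},
\]
and feeding this into the right-hand inequality of \eqref{boundedop} with $M=M_0$ gives
\[
\|T(t)\|=\|F_t(A)\|\le 4M_0\esup_{\lambda\in\sigma(A)}|F_t(\lambda)|\le 4M_0e^{ts(A)}.
\]
Invoking the SBeGB identity $s(A)=\omega_0$ of Corollary \ref{SBeGBcor} turns this into the asserted estimate \eqref{grest}.

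It remains to argue that $\omega_0$ is the \emph{best} constant in the exponent. Here I would appeal to the definition of the growth bound: by \eqref{gb} (and its reformulation recalled in the Preliminaries), $\omega_0$ is exactly the infimum of all $\omega\in\R$ admitting an exponential estimate $\|T(t)\|\le M'e^{\omega t}$, so no $\omega<\omega_0$ can work. The estimate \eqref{grest} just obtained shows that $\omega=\omega_0$ itself does work, with the explicit constant $M'=4M_0$; thus the infimum defining $\omega_0$ is attained in the exponent, and $\omega_0$ is the smallest, i.e.\ best, admissible exponent.

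I expect the only genuinely delicate point to be the passage from the spectral bound $s(A)$ to the $E_A$-essential supremum of $|e^{t\lambda}|$, where one must ensure this essential supremum is governed by $s(A)=\sup\{\Rep\lambda\mid\lambda\in\sigma(A)\}$ and not by values off the spectrum. This is dispatched cleanly by the one-sided pointwise bound above, which uses only that $\sigma(A)$ is the support of $E_A$, so that $\{\lambda\mid\Rep\lambda\le s(A)\}$ carries the full spectral measure; no reverse inequality, and hence no sharpness of the essential supremum, is needed for \eqref{grest}.
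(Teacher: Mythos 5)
Your proof is correct and follows essentially the same route as the paper's: represent $T(t)$ via the Borel operational calculus, bound $|e^{t\lambda}|$ by $e^{ts(A)}$ on $\sigma(A)$, apply the norm estimate \eqref{boundedop} with the optimal constant $M_0$, and invoke Corollary \ref{SBeGBcor} to replace $s(A)$ by $\omega_0$. Your added remarks --- the explicit justification that $\omega_0$ is the best exponent via the definition of the growth bound, and the care about the $E_A$-essential supremum being controlled by the spectrum as the support of $E_A$ --- are correct refinements of points the paper leaves implicit.
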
 

\begin{proof}
Since
\[
T(t)=e^{tA}:=\int\limits_{\sigma(A)} e^{t\lambda}\,dE_A(\lambda),\ t\ge 0,
\]
(see Preliminaries), by \eqref{boundedop}, for any $t\ge 0$,
\begin{multline*}
\left\|T(t)\right\|=\|e^{tA}\|
=\left\|\int\limits_{\sigma(A)} e^{t\lambda}\,dE_A(\lambda)\right\|
\le 4M\esup_{\lambda\in \sigma(A)}\left|e^{t\lambda}\right|
\le 4M\sup_{\lambda\in \sigma(A)}\left|e^{t\lambda}\right|
\\
\shoveleft{
=4M\sup_{\lambda\in \sigma(A)}e^{t\Rep\lambda}\le 4Me^{s(A)t}
}\\
\hfill
\text{in view of $s(A)=\omega_0$};
\\
\ \ \
=4Me^{\omega_0t},
\hfill
\end{multline*}
where $M\ge 1$ is from \eqref{bounded}.

Therefore,
\[
\left\|T(t)\right\|=\|e^{tA}\|\le 4M_0e^{\omega_0t},\ t\ge 0,
\]
where $\omega_0=s(A)$ (see Corollary \ref{SBeGBcor}) is the \textit{best stability constant in the exponent} and $M_0$, defined by \eqref{bsc}, is the \textit{smallest} $M\ge 1$ for which \eqref{bounded} holds.
\end{proof}

\begin{samepage}
\begin{rems}\
\begin{itemize}
\item Thus, for a $C_0$-semigroup $\left\{T(t) \right\}_{t\ge 0}$ (of scalar type spectral operators) on a complex Banach space generated by a scalar type spectral operator $A$ with spectral measure $E_A(\cdot)$,
$\omega_0=s(A)$ is the best stability constant in the exponent and the other best stability constant satisfies the estimate
\[
1\le \min\left\{M\ge 1\,\middle|\,  \|T(t)\|\le Me^{\omega_0t},\, t\ge 0 \right\}\le 4\sup_{\delta \in \mathscr{B}(\C)}\|E_A(\delta)\|.
\]
%%%%%
\item As the next example demonstrates, for a non-scalar-type-spectral $C_0$-semi\-group on a complex Banach space, even under \textit{spectral bound equal growth bound condition} \eqref{SBeGB}, the best stability constants need not exist.
\end{itemize}
\end{rems}
\end{samepage}

\begin{exmp}\label{QNexmp}
On the complex Banach space $l_p^{(2)}$ ($1\le p\le \infty$), the bounded linear operator $A$ of multiplication by the matrix
\[
\begin{bmatrix}
0&1\\
0&0
\end{bmatrix},
\]
which is nonzero \textit{nilpotent}, and hence, not scalar type spectral (see, e.g., \cite{Dun-SchIII}), generates the \textit{uniformly continuous} (not scalar type spectral) semigroup of its exponentials
\[
e^{tA}:=\sum_{n=0}^\infty\dfrac{t^n}{n!}A^n,\ t\ge 0,
\]
where $e^{tA}$, $t\ge 0$, is the bounded linear operator on $l_p^{(2)}$ of multiplication by the matrix
\[
\begin{bmatrix}
1&t\\
0&1
\end{bmatrix}
\]
(see, e.g., \cite{Engel-Nagel2006,Markin2020EOT}).

Although, in the considered case,
\begin{equation*}
s(A)=\omega_0=0,
\end{equation*}
the exponential estimate
\begin{equation*}
\|T(t)\|\le M e^{\omega_0t}=M,\ t\ge 0,
\end{equation*}
holds for no $M\ge 1$ (cf. \cite{Engel-Nagel2006}).
\end{exmp}

For $C_0$-semigroups of normal operators on complex Hilbert spaces exponential estimate \eqref{grest} can be refined as follows.

\begin{prop}[Exponential Estimate for Normal $C_0$-Semigroups]\label{GRESTN}\ \\
Let $\left\{T(t) \right\}_{t\ge 0}$ be a $C_0$-semigroup (of normal operators) on a complex Hilbert space $(X,(\cdot,\cdot),\|\cdot\|)$ generated by a normal operator $A$ with spectral measure $E_A(\cdot)$. Then
\begin{equation}\label{grestn}
\|T(t)\|\le e^{\omega_0t},\ t\ge 0,
\end{equation}
with $\omega_0=s(A)$ and $M_0:=1$ being the best stability constants.
\end{prop}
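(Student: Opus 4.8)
The plan is to sharpen, in the Hilbert-space normal setting, the general two-sided estimate \eqref{boundedop} so that the factor $4M$ collapses to $1$, and then to specialize the Borel operational calculus to $F(\lambda)=e^{t\lambda}$. First I would record the value of $M_0$: since $A$ is normal, the spectral projections $E_A(\delta)$, $\delta\in\mathscr{B}(\C)$, are \emph{orthogonal}, and an orthogonal projection has norm $1$ when nonzero and $0$ otherwise; hence $M_0=\sup_{\delta\in\mathscr{B}(\C)}\|E_A(\delta)\|=1$ in \eqref{bsc}, so that the constant $M$ of \eqref{bounded} may be taken equal to $1$. A normal operator is a scalar type spectral operator, so Corollary \ref{SBeGBcor} and Proposition \ref{GRESTP} already apply, yielding $s(A)=\omega_0$ and, a priori, only $\|T(t)\|\le 4e^{\omega_0 t}$; the content of the proposition is to remove the factor $4$.

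The key step is that for a normal operator the Borel operational calculus is \emph{isometric}, i.e.\ $\|F(A)\|=\esup_{\lambda\in\sigma(A)}|F(\lambda)|$ for every $E_A$-essentially bounded Borel function $F$. This strengthens the upper bound in \eqref{boundedop} from $4M\,\esup_{\lambda\in\sigma(A)}|F(\lambda)|$ to $\esup_{\lambda\in\sigma(A)}|F(\lambda)|$, the improvement being genuinely Hilbert-space specific. I would justify it via the Pythagorean identity afforded by orthogonality: for a simple function $F=\sum_i c_i\chi_{\delta_i}$ with the $\delta_i$ pairwise disjoint, the ranges of the orthogonal projections $E_A(\delta_i)$ are mutually orthogonal, whence $\|F(A)x\|^2=\sum_i|c_i|^2\|E_A(\delta_i)x\|^2\le(\max_i|c_i|)^2\|x\|^2$ for all $x\in X$; this gives $\|F(A)\|\le\sup_i|c_i|$, and passing to the limit along simple approximants yields $\|F(A)\|\le\esup_{\lambda\in\sigma(A)}|F(\lambda)|$, while the reverse inequality is the left half of \eqref{boundedop}.

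Applying this with $F(\lambda)=e^{t\lambda}$, and using that $\esup_{\lambda\in\sigma(A)}\Rep\lambda=\sup_{\lambda\in\sigma(A)}\Rep\lambda=s(A)$ (the $E_A$-essential supremum of the continuous function $\Rep\lambda$ over the support $\sigma(A)=\supp E_A$ coincides with its ordinary supremum, since a continuous function attaining a value near its supremum on the support does so on a set of positive spectral measure), I would obtain
\[
\|T(t)\|=\esup_{\lambda\in\sigma(A)}\bigl|e^{t\lambda}\bigr|=\esup_{\lambda\in\sigma(A)}e^{t\Rep\lambda}=e^{t\,s(A)}=e^{\omega_0 t},\quad t\ge 0,
\]
which is \eqref{grestn}, in fact with equality. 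For the optimality of the constants, $\omega_0=s(A)$ is the best stability constant in the exponent exactly as in Proposition \ref{GRESTP}, and since $\|T(0)\|=\|I\|=1$ any admissible $M'$ in the estimate must satisfy $M'\ge\|T(0)\|=1$, so $M_0=1$ is indeed the smallest such constant.

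The main obstacle is precisely the isometry of the normal functional calculus, that is, eliminating the constant $4M$ of \eqref{boundedop}. Everything rests on the orthogonality of the spectral projections, which is unavailable in the general Banach space framework and is exactly what forces the weaker factor there; once this Pythagorean contraction estimate is in hand, the remaining computation of $\esup_{\lambda\in\sigma(A)}e^{t\Rep\lambda}$ and the optimality assertions are routine.
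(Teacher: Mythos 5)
Your proposal is correct and rests on exactly the same idea as the paper's proof: the orthogonality of the spectral projections of a normal operator yields the Pythagorean identity $\|F(A)f\|^2=\int_{\sigma(A)}|F(\lambda)|^2\,d(E_A(\lambda)f,f)$, which eliminates the factor $4M$ of \eqref{boundedop}. The only difference is one of packaging --- you first establish the isometry $\|F(A)\|=\esup_{\lambda\in\sigma(A)}|F(\lambda)|$ of the normal functional calculus via simple functions and then specialize to $F(\lambda)=e^{t\lambda}$, whereas the paper applies the identity directly to $T(t)f=e^{tA}f$ and bounds $|e^{t\lambda}|^2=e^{2t\Rep\lambda}\le e^{2s(A)t}$ on $\sigma(A)$; both arguments are sound.
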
 

\begin{proof}
Since
\[
T(t)=e^{tA}:=\int\limits_{\sigma(A)} e^{t\lambda}\,dE_A(\lambda),\ t\ge 0,
\]
(see, e.g., \cite{Engel-Nagel2006,Plesner}), for arbitrary $t\ge 0$ and $f\in X$, by the properties of the Borel operational calsulus,
\begin{align*}
\|T(t)f\|&=\|e^{tA}f\|
=\left\|\int\limits_{\sigma(A)} e^{t\lambda}\,dE_A(\lambda)f\right\|
={\left[\int\limits_{\sigma(A)} {\left|e^{t\lambda}\right|}^2\,d(E_A(\lambda)f,f)\right]}^{1/2} \\
&={\left[\int\limits_{\sigma(A)} e^{2t\Rep\lambda}\,d(E_A(\lambda)f,f)\right]}^{1/2}
\le {\left[e^{2s(A)t}{\|E_A(\sigma(A))f\|}^2\right]}^{1/2} \\
\intertext{\hfill since $E_A(\sigma(A))=I$;}
&={\left[e^{2s(A)t}{\|f\|}^2\right]}^{1/2}
=e^{s(A)t}\|f\| \\
\intertext{\hfill in view of $s(A)=\omega_0$;}
&=e^{\omega_0t}\|f\|.
\end{align*}

Whence, we obtain exponential estimate \eqref{grestn}, in which $\omega_0=s(A)$ (see Corollary \ref{SBeGBcor}) and $M_0:=1$ are the best stability constants.
\end{proof} 

%%%%%%%%%%%%%%%%%%%%%%%%%%%%%%%%%%%%%%%%%%%%
\section{Generalized Lyapunov Stability Theorem}

From the \textit{SBeGB Condition Corollary} (Corollary \ref{SBeGBcor}) and the \textit{Exponential Estimate for Scalar Type Spectral $C_0$-Semigroups} (Proposition \ref{GRESTP}), we arrive at the following version of the \textit{Generalized Lyapunov Stability Theorem} (Theorem \ref{GLST}) for scalar type spectral $C_0$-semigroups.

\begin{thm}[GLST for Scalar Type Spectral $C_0$-Semigroups]\label{GLSTS}\ \\
A $C_0$-Semigroup $\left\{T(t) \right\}_{t\ge 0}$ (of scalar type spectral operators) on a complex Banach space generated by a scalar type spectral operator $A$ is uniformly exponentially stable iff 
\[
s(A)<0,
\]
in which case
\begin{equation*}
\|T(t)\|\le 4M_0 e^{\omega_0t},\ t\ge 0,
\end{equation*}
where
\[
\omega_0=s(A)\quad \text{and}\quad
M_0:=\sup_{\delta \in \mathscr{B}(\C)}\|E_A(\delta)\|\ge 1.
\]
\end{thm}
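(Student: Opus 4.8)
The plan is to assemble the statement directly from the two results already established for scalar type spectral $C_0$-semigroups, since all the substantive work has been carried out in the Preliminaries and in the preceding section. The theorem has two parts: the stability criterion $s(A)<0$ and, when it holds, the quantitative exponential estimate. I would treat these in turn, arguing that neither requires any new machinery.

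First I would establish the equivalence. By the \textit{SBeGB Condition Corollary} (Corollary \ref{SBeGBcor}), the semigroup satisfies spectral bound equal growth bound condition \eqref{SBeGB}, so that $s(A)=\omega_0$. This places us squarely within the hypothesis of the \textit{Generalized Lyapunov Stability Theorem} (Theorem \ref{GLST}), which then yields that the semigroup is uniformly exponentially stable iff $s(A)<0$. Alternatively, one can argue straight from the definition: uniform exponential stability of $\left\{T(t)\right\}_{t\ge 0}$ means precisely $\omega_0<0$, and since $s(A)=\omega_0$ this is equivalent to $s(A)<0$. Either route closes the equivalence cleanly.

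Next, for the quantitative part, I would simply invoke the \textit{Exponential Estimate for Scalar Type Spectral $C_0$-Semigroups} (Proposition \ref{GRESTP}), which furnishes
\[
\|T(t)\|\le 4M_0 e^{\omega_0 t},\quad t\ge 0,
\]
with $\omega_0=s(A)$ the best stability constant in the exponent and $M_0:=\sup_{\delta\in\mathscr{B}(\C)}\|E_A(\delta)\|\ge 1$, valid unconditionally for such semigroups; in particular it persists once $s(A)<0$ has been verified. No separate computation of the constants is needed here, as they are inherited verbatim from that proposition.

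The main point to emphasize is that this theorem presents no genuine obstacle of its own: it is a combination result. The entire difficulty was front-loaded into the \textit{Precise Weak Spectral Mapping Theorem} underlying Corollary \ref{SBeGBcor} and into the operational-calculus norm bound \eqref{boundedop} underlying Proposition \ref{GRESTP}. The only delicate point is bookkeeping — matching the definition of uniform exponential stability against the growth bound $\omega_0$ so that the chain $\text{(u.e.s.)}\iff \omega_0<0\iff s(A)<0$ is applied with the identification $s(A)=\omega_0$ firmly in place.
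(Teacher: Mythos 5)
Your proposal is correct and matches the paper's own derivation exactly: the theorem is obtained by combining Corollary \ref{SBeGBcor} with the Generalized Lyapunov Stability Theorem (Theorem \ref{GLST}) for the equivalence, and Proposition \ref{GRESTP} for the exponential estimate. No further comment is needed.
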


For the $C_0$-semigroups of normal operators on complex Hilbert spaces, in view of the \textit{Exponential Estimate for Normal $C_0$-Semigroups} (Proposition \ref{GRESTN}), the exponential estimate in the prior statement can be refined as follows.

\begin{cor}[GLST for Normal $C_0$-Semigroups]\label{GLSTN}\ \\
A $C_0$-Semigroup $\left\{T(t) \right\}_{t\ge 0}$ (of normal operators) on a complex Hilbert space generated by a normal operator $A$ is uniformly exponentially stable iff 
\[
s(A)<0,
\]
in which case
\begin{equation*}
\|T(t)\|\le e^{\omega_0t},\ t\ge 0,
\end{equation*}
where $\omega_0=s(A)$.
\end{cor}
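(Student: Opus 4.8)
The plan is to obtain this corollary by specializing the scalar-type-spectral version (Theorem~\ref{GLSTS}) to the normal setting and then sharpening the accompanying norm estimate via Proposition~\ref{GRESTN}. The crucial observation, already recorded in the Preliminaries, is that a normal operator $A$ on a complex Hilbert space is a scalar type spectral operator whose spectral measure $E_A(\cdot)$ consists of \emph{orthogonal} projections. Consequently, the $C_0$-semigroup $\left\{T(t)\right\}_{t\ge 0}$ generated by $A$ is a $C_0$-semigroup of scalar type spectral operators, and the full machinery of the preceding section applies to it.

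First, I would invoke Theorem~\ref{GLSTS} verbatim: since the normal semigroup is in particular a scalar type spectral $C_0$-semigroup, it is uniformly exponentially stable if and only if $s(A)<0$. This settles the equivalence with no further work, the characterization of stability being inherited directly from the broader class.

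Second, for the quantitative estimate in the case $s(A)<0$, rather than carrying over the generic bound $\|T(t)\|\le 4M_0 e^{\omega_0 t}$ supplied by Theorem~\ref{GLSTS}, I would appeal to the refined Proposition~\ref{GRESTN}. That proposition exploits the orthogonality of the spectral projections—equivalently, the unitary invariance encoded in the identity $\|E_A(\delta)\|=1$, so that $M_0=1$ and the factor $4$ is absent—to produce $\|T(t)\|\le e^{\omega_0 t}$ for all $t\ge 0$, with $\omega_0=s(A)$. Substituting this sharper estimate into the conclusion of Theorem~\ref{GLSTS} yields precisely the asserted statement.

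There is no genuine obstacle here: the substantive content is entirely front-loaded into Theorem~\ref{GLSTS} and Proposition~\ref{GRESTN}, and the corollary is merely their conjunction. The only point requiring care is to verify that the normal semigroup qualifies as a scalar type spectral semigroup so that Theorem~\ref{GLSTS} is applicable in the first place, and then to replace its generic estimate by the optimal normal-operator bound. The orthogonality of the spectral projections is exactly what removes the constant $4M_0$ and delivers the sharp contraction-type estimate $e^{\omega_0 t}$.
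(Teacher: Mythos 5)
Your argument matches the paper's own derivation exactly: the corollary is obtained by specializing Theorem~\ref{GLSTS} to the normal case (a normal operator being scalar type spectral) and replacing the generic bound $4M_0e^{\omega_0 t}$ with the sharper estimate of Proposition~\ref{GRESTN}, which is precisely how the paper presents it. No gaps.
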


%%%%%%%%%%%%%%%%%%%%%%%%%%%%%%%%%%%%%%%%%%%%%%
\section{Uniform Exponential Stability}

The following statement extends the \textit{Gearhart-Pr\"uss-Greiner Characterization} (Theorem \ref{G-P-GC}) \cite{Gearhart1978,Pruss1984,Greiner1985} to 
a Banach space setting.

\begin{thm}[Characterization of Uniform Exponential Stability]\label{CUES}\ \\
For a $C_0$-semigroup $\left\{T(t) \right\}_{t\ge 0}$ on a complex Banach space $(X,\|\cdot\|)$ with generator $A$
to be uniformly exponentially stable it is necessary and, provided spectral bound equal growth bound condition \eqref{SBeGB} holds, sufficient that
\begin{equation}\label{incles}
\left\{\lambda\in \C \,\middle|\, \Rep\lambda\ge 0\right\}\subseteq \rho(A)\quad \text{and}\quad
\sup_{\Rep\lambda\ge 0}\|R(\lambda,A)\|<\infty.
\end{equation}
\end{thm}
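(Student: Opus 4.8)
The plan is to prove the two implications separately. The statement asserts that condition \eqref{incles} is always \emph{necessary} for uniform exponential stability, and that it becomes \emph{sufficient} once \eqref{SBeGB} is assumed. I would begin with necessity, which requires no extra hypotheses. Suppose $\left\{T(t)\right\}_{t\ge 0}$ is uniformly exponentially stable, so that $\omega_0<0$. Since $s(A)\le \omega_0<0$ always holds for any $C_0$-semigroup (see Preliminaries), every $\lambda$ with $\Rep\lambda\ge 0$ satisfies $\Rep\lambda\ge 0>\omega_0\ge s(A)$, hence $\lambda\notin\sigma(A)$, giving the inclusion $\left\{\lambda\in\C\,\middle|\,\Rep\lambda\ge 0\right\}\subseteq\rho(A)$. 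For the uniform bound on the resolvent, I would fix any $\omega$ with $\omega_0<\omega<0$ and use the exponential estimate $\|T(t)\|\le M'e^{\omega t}$ (valid for some $M'\ge 1$ by \eqref{EE}) together with the integral representation of the resolvent
\[
R(\lambda,A)=-\int_0^\infty e^{-\lambda t}T(t)\,dt,\qquad \Rep\lambda> \omega_0,
\]
which converges absolutely and uniformly for $\Rep\lambda\ge 0$. Taking norms yields $\|R(\lambda,A)\|\le \int_0^\infty M'e^{(\omega-\Rep\lambda)t}\,dt\le M'/|\omega|$ for all $\Rep\lambda\ge 0$, so the supremum is finite.

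For sufficiency I would assume both \eqref{incles} and \eqref{SBeGB}. The key observation is that under \eqref{SBeGB} we have $s(A)=\omega_0$, so by the Generalized Lyapunov Stability Theorem (Theorem \ref{GLST}) it suffices to prove $s(A)<0$. The inclusion in \eqref{incles} gives immediately that $\sigma(A)\subseteq\left\{\lambda\in\C\,\middle|\,\Rep\lambda<0\right\}$, hence $s(A)=\sup\{\Rep\lambda\mid\lambda\in\sigma(A)\}\le 0$. The remaining task is to upgrade this to the \emph{strict} inequality $s(A)<0$, which is precisely where the uniform resolvent bound $\sup_{\Rep\lambda\ge 0}\|R(\lambda,A)\|=:C<\infty$ must be exploited: a spectrum contained in the open left half-plane but accumulating on the imaginary axis could still have $s(A)=0$, and it is the resolvent bound that rules this out.

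To extract $s(A)<0$ from the resolvent bound, I would argue by a resolvent-continuation (Neumann series) perturbation. Fix any purely imaginary point $i\beta$ on the boundary. Since $i\beta\in\rho(A)$ with $\|R(i\beta,A)\|\le C$, the standard estimate on the radius of the resolvent analyticity shows that the open disk of radius $1/C$ about $i\beta$ lies in $\rho(A)$, because for $|\mu-i\beta|<1/C$ the operator $A-\mu I=(A-i\beta I)\bigl(I-(\mu-i\beta)R(i\beta,A)\bigr)$ is boundedly invertible via a Neumann series. As $C$ is a single constant independent of $\beta$, every point of the strip $\left\{\lambda\in\C\,\middle|\,-1/C<\Rep\lambda\right\}$ with $\Rep\lambda\ge 0$, and in fact the whole half-plane together with a strip of fixed width $1/C$ to its left, is resolvent; hence $\sigma(A)$ is contained in $\left\{\Rep\lambda\le -1/C\right\}$, yielding $s(A)\le -1/C<0$. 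Combined with $s(A)=\omega_0$ this gives $\omega_0<0$, which is exactly uniform exponential stability.

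The main obstacle, and the step demanding the most care, is this last upgrade from $s(A)\le 0$ to $s(A)<0$. The inclusion alone is insufficient and the role of \eqref{SBeGB} is essential: in a general Banach space one cannot pass directly from a resolvent bound on the imaginary axis to exponential stability (this is exactly the content that fails without a spectral mapping theorem), so the logic must route through the spectral bound via the Neumann-series strip argument and then invoke \eqref{SBeGB} through Theorem \ref{GLST}. I would need to verify carefully that the radius $1/C$ is genuinely uniform along the entire imaginary axis, so that the resulting spectral gap has positive width, and that the resolvent integral representation used in the necessity half is justified by the convergence afforded by $\omega_0<0$.
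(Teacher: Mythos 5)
Your proof is correct and takes essentially the same route as the paper: necessity via the Laplace-transform representation of the resolvent, and sufficiency by reducing to $s(A)<0$ and invoking Theorem \ref{GLST}. The only difference is that the paper runs the sufficiency step by contradiction using the inequality $\|R(\lambda,A)\|\ge 1/\dist(\lambda,\sigma(A))$, whereas you use its contrapositive (the Neumann-series openness radius of the resolvent set), which has the minor bonus of producing the quantitative gap $s(A)\le -1/C$.
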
 

\begin{proof}\

\textit{Necessity}. Suppose that a $C_0$-semigroup $\left\{T(t) \right\}_{t\ge 0}$ on a complex Banach space $(X,\|\cdot\|)$ with generator $A$ is uniformly exponentially stable. Then
\begin{equation}\label{ee}
\exists\, \omega<0,\ \exists\, M=M(\omega)\ge 1:\ \|T(t)\|\le M e^{\omega t},\ t\ge 0.
\end{equation}

Therefore, 
\[
\left\{\lambda\in \C \,\middle|\, \Rep\lambda\ge 0\right\}\subseteq \left\{\lambda\in \C \,\middle|\, \Rep\lambda>\omega\right\}\subseteq \rho(A).
\]
Also, for arbitrary $\lambda \in \C$ with $\Rep\lambda>\omega$ and $f\in X$,
\[
R(\lambda,A)f=-\int_0^\infty e^{-\lambda t}T(t)f\,dt
\]
(see, e.g., \cite{Engel-Nagel2006,Hille-Phillips}) and
\begin{multline*}
\|R(\lambda,A)f\|
=\left\|-\int_0^\infty e^{-\lambda t}T(t)f\,dt\right\|
\le \int_0^\infty \left\|e^{-\lambda t}T(t)f\right\|\,dt
\\
\shoveleft{
\le \int_0^\infty e^{-\Rep\lambda t}\|T(t)\|\|f\|\,dt
}\\
\hfill \text{by \eqref{ee}};
\\
\ \ 
\le M\int_0^\infty e^{-(\Rep\lambda-\omega) t}\,dt \|f\|
=\frac{M}{\Rep\lambda-\omega}\|f\|,
\hfill
\end{multline*}
which implies that
\[
\sup_{\Rep\lambda\ge 0}\|R(\lambda,A)\|\le \sup_{\Rep\lambda\ge 0}\frac{M}{\Rep\lambda-\omega}
= -\frac{M}{\omega}<\infty,
\]
completing the proof of the \textit{necessity}.

\textit{Sufficiency}. Let us prove this part \textit{by contradiction}.

Suppose that a $C_0$-semigroup $\left\{T(t) \right\}_{t\ge 0}$ on a complex Banach space $(X,\|\cdot\|)$ with generator $A$ is subject to \textit{spectral bound equal growth bound condition} \eqref{SBeGB} and conditions \eqref{incles} but is not uniformly exponentially stable, which, by the \textit{Generalized Lyapunov Stability Theorem} (Theorem \ref{GLST}), is equivalent to the fact that
\begin{equation}\label{sbnn}
s(A):=\sup\left\{\Rep\lambda \,\middle|\,\lambda\in\sigma(A) \right\}\ge 0.
\end{equation}

Observe that this implies, in particular, that $\sigma(A)\neq \emps$ (see Preliminaries).

In view of the inclusion
\begin{equation*}
\left\{\lambda\in \C \,\middle|\, \Rep\lambda\ge 0\right\}\subseteq \rho(A),
\end{equation*}
\eqref{sbnn} implies that
\begin{equation}\label{sb0}
s(A)=0.
\end{equation}

The operator $A$ being \textit{closed}, for an arbitrary $\lambda \in \rho(A)$,
\[
\|R(\lambda,A)\|\ge \dfrac{1}{\dist(\lambda,\sigma(A))},
\]
where
\[
\dist(\lambda,\sigma(A)):=\inf_{\mu\in \sigma(A)}|\mu-\lambda|,
\]
(see, e.g., \cite{Dun-SchI,Markin2020EOT}), which, in view of \eqref{sb0}, implies that
\begin{equation*}
\sup_{\Rep\lambda\ge 0}\|R(\lambda,A)\|
\ge \sup_{\Rep\lambda\ge 0}\dfrac{1}{\dist(\lambda,\sigma(A))} 
=\dfrac{1}{\inf_{\Rep\lambda\ge 0}\dist(\lambda,\sigma(A))}=\infty,
\end{equation*}
\textit{contradicting} \eqref{incles}.

The obtained contradiction completes the proof of the \textit{sufficiency}, and hence, of the entire statement.
\end{proof} 

\begin{rems}\
\begin{itemize}
\item Thus, the \textit{necessity} of the \textit{Gearhart-Pr\"uss-Greiner characterization of the uniform exponential stability} \cite[Theorem V.$3.8$]{Engel-Nagel2006} holds in a Banach space setting.
%%%%
\item As the next example shows, the requirement that the semigroup be subject to \textit{spectral bound equal growth bound condition} \eqref{SBeGB} in the \textit{sufficiency} of the prior characterization is essential and cannot be dropped.
\end{itemize}
\end{rems} 

\begin{exmp}
As discussed in \cite[Counterexample V.$1.26$]{Engel-Nagel2006}, the left translation $C_0$-semigroup
\[
[T(t)f](x):=f(x+t),\ t,x\ge 0,
\]
on the complex Banach space
\[
X:=\left\{f\in C(\R_+) \,\middle|\, \lim_{x\to \infty}f(x)=0\ \text{and}\ \int_0^\infty|f(x)|e^x\,dx<\infty \right\}
\]
($\R_+:=[0,\infty)$) with the norm
\[
X\ni f\mapsto \|f\|:=\|f\|_\infty+\|f\|_1=\sup_{x\ge 0}|f(x)|+\int_0^\infty|f(x)|e^x\,dx
\]
is generated by the differentiation operator
\[
Af:=f'
\]
with the \textit{domain}
\[
D(A):=\left\{f\in X\,\middle|\, f\in C^1(\R_+),f'\in X \right\},
\]
for which
%\begin{equation}\label{counterexmp1}
%\sigma_p(A)=\left\{\lambda\in \C \,\middle|\,\Rep\lambda< -1 \right\},
%\end{equation}
\begin{equation*}
\sigma(A)=\left\{\lambda\in \C \,\middle|\,\Rep\lambda\le -1 \right\},
\end{equation*}
and hence $s(A)=-1$.

The semigroup is not uniformly exponentially stable since
\[
\|T(t)\|=1,\ t\ge 0,
\]
and hence, $\omega_0=0$.

Thus,
\[
s(A)\ne \omega_0.
\]

However, the resolvent
\[
R(\lambda,A)f=-\int_{0}^{\infty}e^{-\lambda t}T(t)f\,dt,\ f\in X,
\]
of the generator $A$ exists for all $\lambda\in \C$ with $\Rep\lambda>-1$ and 
\[
\sup_{\Rep\lambda\ge 0}\|R(\lambda,A)\|<\infty
\]
(see \cite[Comments V.$3.9$]{Engel-Nagel2006}).
\end{exmp}

By  the \textit{SBeGB Condition Corollary} (Corollary \ref{SBeGBcor}) and \cite[Corollary V.$2.9$]{Engel-Nagel2006} (see also \cite[Corollary V.$2.10$]{Engel-Nagel2006}), we arrive at

\begin{cor}[Characterization of Uniform Exponential Stability]\ \\
For a $C_0$-semigroup $\left\{T(t) \right\}_{t\ge 0}$ on a complex Banach space $(X,\|\cdot\|)$ with generator $A$ to be uniformly exponentially stable it is necessary and, provided the semigroup is scalar type spectral or eventually norm-continuous, in particular eventually compact, eventually differentiable, analytic, or uniformly continuous, sufficient that
\begin{equation*}
\left\{\lambda\in \C \,\middle|\, \Rep\lambda\ge 0\right\}\subseteq \rho(A)\quad \text{and}\quad
\sup_{\Rep\lambda\ge 0}\|R(\lambda,A)\|<\infty.
\end{equation*}
\end{cor}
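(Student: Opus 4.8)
The plan is to recognize this corollary as an immediate consequence of the \emph{Characterization of Uniform Exponential Stability} (Theorem \ref{CUES}), once one observes that each of the listed semigroup classes is subject to \emph{spectral bound equal growth bound condition} \eqref{SBeGB}. Theorem \ref{CUES} already furnishes the full equivalence under the blanket assumption \eqref{SBeGB}; the present statement merely replaces that assumption by the concrete structural hypotheses (scalar type spectral, or eventually norm-continuous together with its regular subclasses), so the entire task reduces to verifying \eqref{SBeGB} in each case and then quoting Theorem \ref{CUES}.

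First I would dispatch the \emph{necessity}, which is unconditional: the necessity half of Theorem \ref{CUES} was established without invoking \eqref{SBeGB}, so for an arbitrary $C_0$-semigroup uniform exponential stability already forces the inclusion $\left\{\lambda\in \C \,\middle|\, \Rep\lambda\ge 0\right\}\subseteq \rho(A)$ together with the uniform resolvent bound $\sup_{\Rep\lambda\ge 0}\|R(\lambda,A)\|<\infty$. Nothing further is required here, and this half needs none of the extra regularity assumptions.

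For the \emph{sufficiency}, I would treat the two structural hypotheses in turn and then appeal to the sufficiency half of Theorem \ref{CUES}. In the scalar type spectral case, condition \eqref{SBeGB} holds by the \emph{SBeGB Condition Corollary} (Corollary \ref{SBeGBcor}). In the eventually norm-continuous case---which, as noted in the Preliminaries, encompasses the eventually compact, eventually differentiable, analytic, and uniformly continuous semigroups---one has the \emph{spectral mapping theorem} \eqref{SMT}, hence a fortiori the weak form \eqref{WSMT}, from which \eqref{SBeGB} follows by \cite[Proposition V.$2.3$]{Engel-Nagel2006}; equivalently, this is precisely the content of \cite[Corollary V.$2.9$]{Engel-Nagel2006} and \cite[Corollary V.$2.10$]{Engel-Nagel2006}. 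In either case \eqref{SBeGB} is in force, so the hypothesis of the sufficiency part of Theorem \ref{CUES} is met and the conclusion follows, thereby completing the proof of the entire statement.

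There is no genuine analytic obstacle in this corollary: the substantive work---the resolvent-integral estimate for necessity and the resolvent-lower-bound versus distance-to-spectrum argument for sufficiency---was already carried out in the proof of Theorem \ref{CUES}. The only point demanding care is bookkeeping, namely confirming that each named regularity class is indeed a subclass of the eventually norm-continuous semigroups and therefore inherits \eqref{SBeGB}, while noting that the scalar type spectral case is handled by a separate route through Corollary \ref{SBeGBcor} rather than through eventual norm-continuity. Accordingly I expect the ``hard part'' to be purely expository: stating the reduction cleanly enough that the reader sees both structural hypotheses funnel into the single condition \eqref{SBeGB}.
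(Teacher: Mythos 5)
Your proposal matches the paper's argument exactly: the paper derives this corollary immediately from Theorem \ref{CUES} by noting that condition \eqref{SBeGB} holds for scalar type spectral semigroups via Corollary \ref{SBeGBcor} and for eventually norm-continuous semigroups via \cite[Corollary V.$2.9$]{Engel-Nagel2006} (see also \cite[Corollary V.$2.10$]{Engel-Nagel2006}), with the necessity half of Theorem \ref{CUES} being unconditional. Your treatment is correct and essentially identical to the paper's.
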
 

On the regularity of scalar type spectral $C_0$-semigroups, see \cite{arXiv:2103.05260}.

%%%%%%%%%%%%%%%%%%%%%%%%%%%%%%%%%%%%%%%%%%%%%%
\section{Acknowledgments}

My sincere gratitude is to Dr.~Yuri Latushkin for his reference to the \textit{Gearhart-Pr\"uss-Greiner characterization} during our conversation at the Joint Mathematics Meetings 2019, thoughtful suggestions concerning the best stability constants, and insightful remarks.

%%%%%%%%%%%%%%%%Bibliography%%%%%%%%%%%%%%%%%%%%%%%

%%%%%%%%%%%%%%%%%%%%%%%%%
\end{document}